\documentclass[english,11pt]{amsart}
\usepackage[english]{babel}
\usepackage{amsmath,amssymb,enumerate,amsthm}
\usepackage[latin1]{inputenc}
\usepackage[T1]{fontenc}
\usepackage{cite}
\usepackage{latexsym}
\usepackage{setspace}

\newtheorem{lemma}{Lemma}[section]
\newtheorem{teorema}[lemma]{Theorem}
\newtheorem{prop}[lemma]{Proposition}

\newtheorem{rk}[lemma]{Remark}

\newcommand{\SL}{\mathrm{SL}(2,\mathbb{Z})}

\newcommand{\A}{\mathcal{A}}

\newcommand{\z}{\mathfrak{z}}

\def\H{\mathcal{H}}
\def\={\;=\;}
\def\.={\;\dot{=}\;}

\def\O{\mathcal{O}}



\begin{document}

\title[Meromorphic analogues of modular forms in Shitani's lift]{Meromorphic analogues of modular forms generating the kernel of Shintani's lift}
\author{Paloma Bengoechea}
\address{Department of Mathematics, University of York, York, YO10 5DD, United Kingdom}
\email{paloma.bengoechea@york.ac.uk}

\maketitle

\begin{abstract} We study the meromorphic modular forms defined as sums of $-k$ ($k\geq 2$) powers of integral quadratic polynomials with negative discriminant. These functions can be viewed as meromorphic analogues of the holomorphic modular forms defined in the same way with positive discriminant, first investigated by Zagier in connection with the Doi-Naganuma map and then by Kohnen and Zagier in connection with Shimura-Shintani lifts. We compute the Fourier coefficients of these meromorphic modular forms and we show that they split into the sum of a meromorphic modular form with computable algebraic Fourier coefficients and a holomorphic cusp form.
\end{abstract}

\section{Introduction}

\let\thefootnote\relax\footnote{2010 Mathematics Subject Classification: 11F03, 11F11, 11F37.\\Key words and phrases: modular forms, complex multiplication.}

Eisenstein series are defined as sums of $-k$ powers ($k>2$ an even integer) of all linear functions with integer coefficients and variable in the upper complex half plane and play a crucial role in the theory of modular forms. It seems natural to look at similar series where we sum over functions with integer coefficients and higher degree. Sums of this kind, taken over quadratic polynomials with fixed positive discriminant, have been introduced in \cite{Z75} in connection with the Doi-Naganuma correspondence between elliptic modular forms and Hilbert modular forms. More precisely, for each discriminant $D>0$, the sum
\begin{equation}\label{f}
f_{k,D}(z)\=\pi^{-k}\sum_{\substack{(a,b,c)\in\mathbb{Z}^3\\b^2-4ac=D\\a>0}}\dfrac{1}{(az^2+bz+c)^k}\qquad (k\geq 2)
\end{equation}
is a cusp form of weight $2k$ for $\SL$ which arose, in the case where $D$ is a fundamental discriminant, by considering the restriction to the diagonal $z_1=z_2$ of a family of Hilbert modular forms $w_m(z_1,z_2)$ ($m=0,1,2,\ldots$) of weight $k$ for the Hilbert modular group $\mathrm{SL}_2(\O)$, where $\O$ is the ring of integers of the real quadratic field with discriminant $D$. The functions $w_m(z_1,z_2)$ are the Fourier coefficients of the kernel function for the Doi-Naganuma correspondence. They are well defined for all positive discriminants $D$ and so is $f_{k,D}(z)$. The Fourier coefficients of $f_{k,D}(z)$ at infinity are calculated in \cite{Z75} in terms of a Bessel function. 

The functions $2\pi^k D^{k-1/2}f_{k,D}(z)$ reappeared in the kernel function for the Shimura and Shintani lifts between half-integral and integral weight cusp forms (\cite{KZ80}). 
They can be interpreted as theta lifts: they are obtained by integrating the $D$-th weight $k+\frac{1}{2}$ classical cuspidal Poincar� series against Shintani's theta function projected into Kohnen's plus space (with respect to Petersson's inner product). 

In \cite{BKV}, the authors generalized the functions $f_{k,D}$ to a natural family of local Maass forms of weight $2k$ which may also be viewed as theta lifts of the $D$-th Poincar� series generalized to the context of weak Maass forms (with respect to Borcherds's regularized version of the Petersson inner product that one can find in \cite{B98}).

In \cite{KZ84}, Kohnen and Zagier calculated the period polynomials of the functions $f_{k,D}(z)$ for $D>0$. Later, Bringmann, Kane and Kohnen introduced in \cite{BKK} the theory of locally harmonic weak Maass forms and they found again the explicit even period polynomials of $f_{k,D}$ as an application of their theory. 

Also the even parts of the Eichler integrals of $f_{k,D}$ for $D>0$ have been studied because of their link with Diophantine approximation, reduction of binary quadratic forms, special values of zeta functions and Dedekind sums (\cite{Z99}, \cite{B}).

The functions $f_{k,D}(z)$ with $D<0$ have been investigated in author's Ph.D. thesis \cite{Bt} and have become interesting because of the similar properties with the case $D>0$. In the case $D<0$ , they have a non-holomorphic part determined by the points of complex multiplication of discriminant $D$. In the next section we prove the convergence of these functions and we calculate their Fourier coefficients in terms of a modified Bessel function. In the third section, we decompose $f_{k,D}(z)$ with $D<0$ into a meromorphic part with computable algebraic Fourier coefficients and a holomorphic cuspidal part with, a priori, transcendent Fourier coefficients. We denote by $\Gamma$ the group $\SL$, by $H$ the Hilbert class field $\mathbb{Q}(j(\O_D),\sqrt{D})$ of $\mathbb{Q}(\sqrt{D})$, and by $S_{2k}^H(\Gamma)$ the $H$-vector space of cusp forms of weight $2k$ for $\Gamma$ with Fourier coefficients (in the expansion at infinity) in $H$. 
We denote by $H_D(X)$ the class polynomial of discriminant $D$ and by $h(D)$ the class number. Throughout the paper we also use the standard notation $j(z)$ and $\Delta(z)$ for the modular j-invariant and the modular discriminant respectively.
Writing
$$
\Phi_D(z)=H_D(j(z))\, \Delta(z)^{h(D)/w},
$$
where $w=1$ if $D\neq -3,-4$; $w=2$ if $D=-4$, and $w=3$ if $D=-3$, we have

\textbf{Theorem I} 
\textit{
The function $\Phi_D(z)^k f_{k,D}(z)$ belongs to the space
\begin{equation}\label{yep}
S^H_{\left(\frac{12h(D)}{w}+2\right)k}(\Gamma)\, +\,  S^\mathbb{C}_{2k}(\Gamma)\, \Phi_D(z)^k.
\end{equation} 
In this decomposition, the modular form in $S^H_{\left(\frac{12h(D)}{w}+2\right)k}(\Gamma)$ is computable. 
}

We use a theorem of Borcherds (\cite{B99}) to find linear combinations of Hecke operators which, by acting on the functions $f_{k,D}(z)$, give explicit integral polynomials on these functions without any transcendent part in decomposition \eqref{yep}. Indeed, given $\underline{\lambda}=\left\{\lambda_n\right\}^\infty_{n=1}\in\oplus^\infty_{n=1}\mathbb{Z}$, denoting by $T_n$ the $n$-th Hecke operator acting on the space of meromorphic modular forms of weight $2k$ for $\Gamma$, and writing
$$
\phi_{\underline{\lambda}}=\sum^\infty_{n=1}\lambda_n T_n,\qquad f_{k,D,\underline{\lambda}}(z)\=f_{k,D}|\phi_{\underline{\lambda}}(z),
$$
we have

\textbf{Theorem II}
\textit{
If there exists a weakly holomorphic modular form
$$
g_{\underline{\lambda}}=\sum^\infty_{n=1} \lambda_n q^{-n}+ O(1)\in M^!_{2-2k}(\Gamma),
$$
then the function $\Phi_D(z)^k f_{k,D,\underline{\lambda}}(z)$ belongs to $S^H_{\left(\frac{12h(D)}{w}+2\right)k}(\Gamma)$.
}

In the appendix, we illustrate the results of section 3 giving the explicit decompositions of the functions $f_{k,D}(z)$ and $f_{k,D,\underline{\lambda}}(z)$ (for a convenient $\underline{\lambda}$) when $D=-3$ and $k\in\left\{2,3,4,5,6,7\right\}$.

\section{Convergence and Fourier coefficients}

Given an integer $D<0$ congruent to 0 or 1 modulo 4, and an integer $k\geq 2$ we define, for $z\in\H$,
$$
f_{k,D}(z)\=\pi^{-k}\sum_{\substack{(a,b,c)\in\mathbb{Z}^3\\b^2-4ac=D\\a>0}}\dfrac{1}{(az^2+bz+c)^k}.
$$
The positivity condition on $a$ does not play any important role, we include it to avoid summing simultaneously over $[a,b,c]$ and $[-a,-b,-c]$. The factor $\pi^{-k}$ is just a normalization factor.

Given a $\Gamma$-equivalence class $\A$ of integral binary quadratic forms with discriminant $D<0$, we define
\begin{equation}\label{f clase}
f_{k,D,\A}(z)\=\pi^{-k}\sum_{\substack{[a,b,c]\in\A \\a>0}}\dfrac{1}{(az^2+bz+c)^k}\qquad(z\in\H,\, k\geq 2).
\end{equation}

\begin{prop} The sums $f_{k,D}(z)$ and $f_{k,D,\A}(z)$ converge absolutely and uniformly. They are meromorphic modular forms of weight $2k$ for $\Gamma$.
\end{prop}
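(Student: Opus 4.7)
The plan is to first prove absolute and uniform convergence of the series on compact subsets of $\H$ disjoint from the roots of the summands, then deduce the weight-$2k$ invariance by rearrangement, and finally conclude meromorphy from the pole structure of each term. The substantive work is the convergence estimate.

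For convergence, I would write $az^2+bz+c = a(z-z_Q)(z-\bar z_Q)$ with $z_Q = (-b + i\sqrt{|D|})/(2a) \in \H$. Fix a compact $K \subset \H$ disjoint from all the roots $z_Q$ and set $y_0 = \min_{z \in K}\mathrm{Im}(z) > 0$; then $|z - \bar z_Q| \geq y_0$ trivially. To bound $|z - z_Q|$ from below I would split by the size of $a$: for $a \geq a_0$ with $\sqrt{|D|}/(2a_0) < y_0/2$, the vertical estimate $\mathrm{Im}(z_Q) = \sqrt{|D|}/(2a) < y/2$ yields $|z - z_Q| \geq y/2$, and combining with the horizontal bound $|z-z_Q| \geq |2ax + b|/(2a)$ gives $|az^2 + bz + c| \gg a + |2ax + b|$ uniformly on $K$. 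Elementary counting shows that $b$ lies in $O(a^\epsilon)$ residue classes modulo $2a$ subject to $b^2 \equiv D \pmod{4a}$, and in each such class the sum $\sum_b (a + |2ax + b|)^{-k}$ is $\ll a^{-k}$ by comparison with $\sum_m(1+|m|)^{-k}$; hence $\sum_{a \geq a_0}$ converges for $k \geq 2$. The finitely many triples with $a < a_0$ contribute a convergent tail handled identically in $b$. The same argument applies verbatim to the subseries $f_{k,D,\A}$.

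For modular invariance, introduce the right action of $\Gamma$ on integral forms given by $(\gamma\cdot Q)(X,Y) = Q(\alpha X + \beta Y, \delta X + \eta Y)$ for $\gamma = \begin{pmatrix}\alpha & \beta \\ \delta & \eta\end{pmatrix}$. A direct calculation from $\gamma z = (\alpha z + \beta)/(\delta z + \eta)$ gives $(\delta z + \eta)^{-2k} Q(\gamma z, 1)^{-k} = (\gamma\cdot Q)(z,1)^{-k}$, so the weight-$2k$ slash action sends each summand to another summand. The action preserves the discriminant; moreover, because $D<0$ with $a > 0$ makes the associated form positive definite, the new leading coefficient $Q(\alpha,\delta) > 0$ is automatic, so the condition $a > 0$ is preserved as well. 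Thus $\Gamma$ permutes the summation set (either all forms of discriminant $D$ with $a>0$, or a single class $\A$), and absolute convergence legitimizes the rearrangement, yielding $f_{k,D}|_{2k}\gamma = f_{k,D}$ and $f_{k,D,\A}|_{2k}\gamma = f_{k,D,\A}$.

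Meromorphy follows because each summand is meromorphic on $\H$ with a single order-$k$ pole at $z_Q$; the map $(a,b,c)\mapsto z_Q$ is injective on the summation set (one recovers $a$ from $\mathrm{Im}(z_Q)$, $b$ from $\mathrm{Re}(z_Q)$, and $c$ from $b^2 - 4ac = D$); and $\{z_Q\}$ is discrete in $\H$, being a finite union of $\Gamma$-orbits of CM points of discriminant $D$. Uniform convergence off this discrete set makes the limit holomorphic there, while at each $z_Q$ the unique contributing term provides a pole of order $k$. The main obstacle is the convergence estimate at the boundary case $k=2$: since the summation is over a codimension-one subset of $\mathbb{Z}^3$, one must jointly exploit the vertical estimate via the size of $a$ and the horizontal estimate via the distribution of $b$ modulo $2a$; a naive $L^\infty$ bound neglecting either direction would fail.
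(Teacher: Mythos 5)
Your proposal is correct, and it reaches the conclusion by a genuinely different counting mechanism than the paper, even though both arguments open the same way: factor $az^2+bz+c=a(z-z_Q)(z-\bar z_Q)$ and bound the two factors from below by vertical and horizontal distances. At that point you fix $a$, invoke the (standard, though asserted rather than proved) divisor-type bound that $b^2\equiv D\pmod{4a}$ confines $b$ to $O_\varepsilon(a^{\varepsilon})$ residue classes modulo $2a$, and sum within each class to get a contribution $\ll a^{\varepsilon-k}$; the paper instead ignores the congruence entirely and performs a dyadic lattice-point count, showing that the number $N(R)$ of pairs $(a,b)$ with the lower bound in $[R,2R)$ is $O(R^{3/2})$, whence $\sum_n N(2^n)2^{-nk}$ converges for $k>3/2$. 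Your route costs the arithmetic input on square roots of $D$ modulo $4a$ but buys convergence already for $k>1+\varepsilon$ and displays the sum-over-$a$ structure that the paper only uses later, when computing the Fourier coefficients through the exponential sums $S_{a,D}(r)$; the paper's count is cruder but purely elementary. Your modularity argument, via the action $(\gamma\cdot Q)(z,1)=(\delta z+\eta)^{2}Q(\gamma z,1)$ together with positive definiteness forcing $Q(\alpha,\delta)>0$, is the same in substance as the paper's check on generators (translation invariance of $a$, and invariance of its sign under $z\mapsto -1/z$ because $a$ and $c$ share a sign when $D<0$), just phrased more invariantly; and your explicit treatment of the pole structure is more detailed than the paper's, which essentially defers it to Section 3. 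Two small repairs: ``the finitely many triples with $a<a_0$'' should read ``the finitely many values of $a<a_0$'' (for each such $a$ there are still infinitely many $b$, handled by the same horizontal estimate), and for those small $a$ you need the distance from $K$ to the set of roots to be positive, which rests on the discreteness of the CM points of discriminant $D$ that you only establish in the final paragraph, so it should be stated up front.
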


\begin{proof} For $z=x+iy\in\mathcal{H}$, we have
\begin{align*}
\pi^k|f_{k,D}(z)|&\leq\sum_{\substack{b^2-4ac=D\\a>0}}\dfrac{1}{\Big|a\Big(z+\dfrac{b+\sqrt{D}}{2a}\Big)\Big(z+\dfrac{b-\sqrt{D}}{2a}\Big)\Big|^k}\\
&\\
&\leq\!\!\!\!\!\sum_{\substack{b^2-4ac=D\\a>0}}\dfrac{1}{|a|^k\mathrm{max}\Big(\Big|x+\dfrac{b}{2a}\Big|,\Big|y+\dfrac{\sqrt{|D|}}{2a}\Big|\Big)^k\mathrm{max}\Big(\Big|x+\dfrac{b}{2a}\Big|,\Big|y-\dfrac{\sqrt{|D|}}{2a}\Big|\Big)^k}.
\end{align*}

For $R>0$ we count the number $N(R)$ of elements $(a,b)$ that occur in the last sum such that
$$
R\leq |a|\; \mathrm{max}\Big(\Big|x+\dfrac{b}{2a}\Big|,\Big|y+\dfrac{\sqrt{|D|}}{2a}\Big|\Big)\; \mathrm{max}\Big(\Big|x+\dfrac{b}{2a}\Big|,\Big|y-\dfrac{\sqrt{|D|}}{2a}\Big|\Big)<2R.
$$
The inequality
$$
\Big|a\Big(y^2-\dfrac{|D|}{4a^2}\Big)\Big|<2R
$$
implies $a=O(R)$. For fixed $a$, the inequality
$$
\Big|a\Big(x+\dfrac{b}{2a}\Big)^2\Big|<2R
$$
implies $b=O(R^\frac{1}{2})$. Hence $N(R)=O(R^\frac{3}{2})$ and
$$
\pi^k\, |f_{k,D}(z)|\, \ll\, \sum_{n=0}^\infty\dfrac{N(2^n)}{2^{nk}}\, \ll\, \sum_{n=0}^\infty\dfrac{1}{2^{n(k-\frac{3}{2})}},
$$
where the implicit constant depends on $z$, but it is still bounded for $z$ in a compact set. The last sum converges for all $k>\frac{3}{2}$.
\\

The convergence of $f_{k,D,\A}(z)$, which is a partial sum of $f_{k,D}(z)$, is immediate, and since the sum is taken over a $\Gamma$-equivalence class, we obtain the modularity.
 
The modularity of $f_{k,D}(z)$ follows, on the one hand, from the invariance of the discriminant $D$ under the action of $\Gamma$; on the other hand, from the fact that $a$ is always translation invariant and, for $D<0$, the coefficients $a$ and $c$ have the same sign, so the sign of $a$ is also invariant under the transformation $\begin{pmatrix}0&1\\-1&0\end{pmatrix}$.
\end{proof}

\begin{prop}
For $z\in\mathcal{H}$ with $\Im(z)>\frac{\sqrt{|D|}}{2}$, the Fourier expansion of the function $f_{k,D}(z)$ is
$$
f_{k,D}(z)\=\pi^{-k}\, \sum^\infty_{r=1}c_r\, e^{2\pi irz},
$$
\begin{equation}\label{coef fourier}
c_r\=\dfrac{2^{k+\frac{1}{2}}\, \pi^{k+1}\, r^{k-\frac{1}{2}}}{|D|^{\frac{k}{2}-\frac{1}{4}}\, (k-1)!}\, \sum^\infty_{a=1}\, a^{-\frac{1}{2}}\, S_{a,D}(r)\, I_{k-\frac{1}{2}}\Big(\dfrac{\pi r\sqrt{|D|}}{a}\Big),
\end{equation}
where
\begin{equation}
S_{a,D}(r)\=\sum_{\substack{b\!\pmod{2a} \\b^2\equiv D\!\pmod{4a}}} e^{\pi irb/a}
\end{equation}
and $I_{k-\frac{1}{2}}$ is a modified Bessel function, related to the Bessel function of the first kind $J_{k-\frac{1}{2}}$ by $I_{k-\frac{1}{2}}(x)=i^{-k+\frac{1}{2}}\, J_{k-\frac{1}{2}}(ix)$.
\end{prop}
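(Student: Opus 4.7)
The plan is to extract $c_r$ as the standard Fourier integral
$$c_r \;=\; e^{2\pi r y}\int_0^1 \pi^k f_{k,D}(x+iy)\,e^{-2\pi i r x}\,dx,$$
valid at any height $y>\sqrt{|D|}/2$, which by the preceding proposition lies above every pole of the individual summands (the root $\alpha=(-b+i\sqrt{|D|})/(2a)$ of $az^2+bz+c$ satisfies $\Im\alpha=\sqrt{|D|}/(2a)\le\sqrt{|D|}/2$). First I would reorganize the sum defining $f_{k,D}$ by fixing $a\ge 1$ and writing every admissible $b$ uniquely as $b=b_0+2am$, where $b_0$ runs over residues mod $2a$ with $b_0^2\equiv D\pmod{4a}$ and $m\in\mathbb{Z}$. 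Since the shift $b\mapsto b+2a$ translates the roots of the quadratic by $-1$, one has $az^2+bz+c=a(z+m)^2+b_0(z+m)+c_0$ with $c_0=(b_0^2-D)/(4a)$. The $\mathbb{Z}$-periodicity of $e^{-2\pi irx}$ then fuses the translated unit intervals $[m,m+1]$ into a full real-line integral, and absolute convergence justifies all exchanges, giving
$$c_r\;=\;e^{2\pi r y}\sum_{a\ge 1}\sum_{\substack{b_0\!\pmod{2a}\\b_0^2\equiv D\!\pmod{4a}}}\int_{-\infty}^\infty\frac{e^{-2\pi i r t}\,dt}{\bigl(a(t+iy)^2+b_0(t+iy)+c_0\bigr)^k}.$$

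Next I would evaluate the inner integral by contour shifting in the $t$-plane. The integrand has two poles of order $k$, at $t=\alpha-iy$ and $t=\bar\alpha-iy$, and the hypothesis $y>\sqrt{|D|}/2$ forces both conjugate poles into the lower half-plane. For $r>0$, the factor $e^{-2\pi irt}$ decays as $\Im(t)\to-\infty$, so closing the contour downward and applying the residue theorem returns $-2\pi i$ times the sum of the two residues. Each residue is the $(k-1)$-st derivative of $e^{-2\pi i r t}/(t-(\text{other root}))^k$ at a pole; Leibniz's rule expands it as a finite sum of terms of shape $(\text{const})\cdot r^{k-1-j}\gamma^{-k-j}e^{\pm 2\pi r\gamma}e^{-2\pi ry}$, with $\gamma=\sqrt{|D|}/(2a)$. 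The factor $e^{-2\pi ry}$ cancels the overall $e^{2\pi ry}$, and the symmetric combination of the two residues pairs the $e^{\pm 2\pi r\gamma}$ into $\sinh(2\pi r\gamma)$ and $\cosh(2\pi r\gamma)$. The elementary closed form of $I_{k-1/2}(x)$ as $\sqrt{2/(\pi x)}$ times a polynomial in $1/x$, $\sinh x$, $\cosh x$ then identifies the paired expression as a constant multiple of $I_{k-1/2}(\pi r\sqrt{|D|}/a)$, with all $a$-dependence collapsing into the $a^{-1/2}$ of the stated series.

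Finally, summing the phase $e^{-2\pi ir(-b_0/2a)}=e^{\pi i r b_0/a}$ over $b_0\pmod{2a}$ produces $S_{a,D}(r)$, and collecting the remaining constants gives the claimed coefficient. For $r\le 0$ the contour is closed instead in the upper half-plane, which contains no poles of the integrand; the corresponding Fourier coefficients therefore vanish, consistently with the factor $r^{k-1/2}$ in the displayed formula. The main technical obstacle is the bookkeeping within the residue expansion: tracking every sign and constant so that the Leibniz-expanded pair recombines \emph{exactly} into the closed form of $I_{k-1/2}$. This step is delicate but essentially forced, and all normalizations can be pinned down by sanity-checking a single low-weight case such as $k=2$, where $I_{3/2}(x)=\sqrt{2/(\pi x)}(\cosh x - \sinh x/x)$.
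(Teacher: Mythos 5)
Your proposal is correct and structurally the same as the paper's: the paper likewise splits the sum over $a\geq 1$ and over $b$ modulo $2a$ with $b^2\equiv D\pmod{4a}$, unfolds the integer translates $b\mapsto b+2am$ into a single line integral at height $C>\frac{\sqrt{|D|}}{2}$ (above all poles, since each pole has imaginary part $\frac{\sqrt{|D|}}{2a}$), and notes that the integral vanishes for $r\leq 0$; your phase bookkeeping producing $S_{a,D}(r)$ and the cancellation of $e^{-2\pi ry}$ are exactly what happens there too. The only divergence is the evaluation of the remaining integral: after the substitution $z=it-\frac{b}{2a}$ the paper recognizes it as the inverse Laplace transform of $\bigl(t^2-\frac{|D|}{4a^2}\bigr)^{-k}$ and simply quotes Abramowitz--Stegun (29.3.60), which yields the factor $\frac{2^{k+\frac12}\pi^{k+1}r^{k-\frac12}}{|D|^{\frac k2-\frac14}\sqrt{a}\,(k-1)!}\,e^{\pi irb/a}\,I_{k-\frac12}\bigl(\pi r\sqrt{|D|}/a\bigr)$ at once, whereas you close the contour in the lower half-plane and compute the two order-$k$ residues. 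That route is legitimate --- for integer $k$ it is essentially how the tabulated formula is proved, and it has the merit of being self-contained --- but as written it stops short of a proof: the Leibniz expansion and its exact recombination into the elementary closed form of $I_{k-\frac12}$ \emph{is} the content of the identity, and ``pinning down all normalizations by checking $k=2$'' does not establish the formula for general $k$. To complete the argument, either carry the residue computation through uniformly in $k$ (matching it against the finite $\sinh$/$\cosh$ expansion of $I_{k-\frac12}$), or do as the paper does and cite the standard Laplace-transform pair.
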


\begin{proof}
We can calculate the Fourier expansion of $f_{k,D}(z)$ by splitting the sum:
$$
\pi^kf_{k,D}(z)\=\sum^\infty_{a=1}f_{k,D}^a(z),
$$
where
$$
f_{k,D}^a(z)\=\sum_{\substack{b\in\mathbb{Z}\\b^2\equiv D\!\pmod{4a}}}\Big(az^2+bz+\dfrac{b^2+|D|}{4a}\Big)^{-k}.
$$
We can split the sum again:
$$
f_{k,D}^a(z)\=\sum_{\substack{b\!\pmod{2a}\\b^2\equiv D\!\pmod{4a}}}\sum_{n\in\mathbb{Z}}\Big(a(z+n)^2+b(z+n)+\dfrac{b^2+|D|}{4a}\Big)^{-k}.
$$
The first sum is finite and for each term in the sum, the $r$-th Fourier coefficient is given by
\begin{equation}\label{int coef fourier}
\int^{\infty+iC}_{-\infty+iC}\Big(az^2+bz+\dfrac{b^2+|D|}{4a}\Big)^{-k}e^{-2\pi irz}dz\qquad (C>\dfrac{\sqrt{|D|}}{2}).
\end{equation}
This integral vanishes for $r\leq 0$ and we can calculate it for $r>0$ changing variables $z=it-\frac{b}{2a}$:
\begin{align*}
\int^{\infty+iC}_{-\infty+iC}\dfrac{e^{-2\pi irz}}{\Big(az^2+bz+\dfrac{b^2+|D|}{4a}\Big)^k}dz \=-&\dfrac{i\, e^{\pi irb/a}}{a^k}\int^{C+i\infty}_{C-i\infty}\dfrac{e^{2\pi rt}}{\Big(t^2-\dfrac{|D|}{4a^2}\Big)^k}dt\\
\\
\= \dfrac{2^{k+\frac{1}{2}}\, \pi^{k+1}\, r^{k-\frac{1}{2}}}{|D|^{\frac{k}{2}-\frac{1}{4}}\, \sqrt{a}\, (k-1)!}\, e^{\pi irb/a}\,  I_{k-\frac{1}{2}}\Big(\dfrac{\pi r\sqrt{|D|}}{a}\Big).
\end{align*}
(The last integral is calculated in \cite{A}, (29.3.60)).
\end{proof}

\section{Algebraic properties of Fourier coefficients}

Before we state and prove the main theorems of this section, we introduce the notation and some results in the theory of modular forms that we need. All the results mentioned here can be found in \cite{Z08}, in the first part, written by D. Zagier.
For a discriminant $D<0$ we denote by $H_D(X)$ the class polynomial
$$
H_D(X)\=\prod_{\mathfrak{z}\in\Gamma\backslash\mathfrak{Z}_D}(X-j(\mathfrak{z}))^{1/w},
$$
where $\mathfrak{Z}_D\subset\H$ is the set of CM points of discriminant $D$ and 
$$
w=\left\{\begin{array}{ll}3 \quad &\mbox{if $D=-3$}\\
2 &\mbox{if $D=-4$}\\
1 &\mbox{if $D\neq -3,-4$}.
\end{array}\right.
$$
We write
\begin{equation}
H_D(j(z))\=\dfrac{\Phi_D(z)}{\Delta(z)^{h(D)/w}}
\end{equation}
with $\Phi_D(z)\in M_{\frac{12h(D)}{w}}(\Gamma)$.

For a $\Gamma$-equivalence class $\A$ of integral binary quadratic forms of discriminant $D<0$, we denote by $\z$ 
a CM point of $\A$ and we define the modular form $\Phi_\A(z)$ of weight $\frac{12}{w}$ by
$$
\Phi_\A(z)\=((j(z)-j(\z))\Delta(z))^{1/w}\=\left\{\begin{array}{ll} 
E_4(z) &\mbox{if $D=-3$}\\
E_6(z) &\mbox{if $D=-4$}\\
E_4(z)^3-j(\z)\, \Delta(z) &\mbox{if $D\neq -3,-4$}.
\end{array}\right.
$$

The poles in $\mathcal{H}$ of the functions $f_{k,D,\A}(z)$ and $f_{k,D}(z)$ are the respective sets $\Gamma \z$ and $\mathfrak{Z}_D$. But $\Gamma \z$ and $\mathfrak{Z}_D$ are also the respective sets of zeros of the functions $\Phi_\A(z)$ and $\Phi_D(z)$. Hence the functions $\Phi_{\A}(z)^k f_{k,D,\A}(z)$ and $\Phi_D(z)^k f_{k,D}(z)$ are cusp forms of respective weight $(\frac{12}{w}+2)k$ and $(\frac{12h(D)}{w}+2)k$.

Since $\Phi_{\A}(z)^k f_{k,D,\A}(z)$ and $\Phi_D(z)^k f_{k,D}(z)$ are holomorphic on the upper-half plane, they have a Taylor expansion in a neighborhood of each  point $z=x+iy\in\H$. The Taylor expansion in the classic sense is not the natural expansion for modular forms because it only converges on a disk centered at $z$ and tangent to the real line, whereas the domain of holomorphy of a modular form is the whole upper-half  plane $\H$. Recall that, for fixed $z\in\H$, the map sending $z'\in\H$ to $w=\dfrac{z'-z}{z'-\bar{z}}$ is an isomorphism between $\H$ and the open unit disk centered at $z$ (the inverse map sends $w$ to $z'=\dfrac{z-\bar{z}w}{1-w}$). It is then more natural to expand a modular form under the action of the transformation $\begin{pmatrix}-\bar{z} &z\\ -1 &1\end{pmatrix}\in\mathrm{SL}(2,\mathbb{C})$ in power series of $w$ (see \eqref{taylor} below). Throughout the paper, we call this new expansion \textit{modified Taylor expansion}.

Another classical concept which is usually modified in the theory of modular forms is the concept of derivative. The classical derivative of a modular form not being modular, several differentiation operators have been introduced. We consider the operator defined, for a modular form $f(z)$ of weight $2k$, by
\begin{equation}\label{operador}
\partial f(z)\=\dfrac{1}{2\pi i}\, \dfrac{df}{dz}(z)-\dfrac{2k}{4\pi y}\, f(z),
\end{equation}
where $y=\Im(z)$. The function $\partial f(z)$ is not holomorphic anymore, but it is modular of weight $2k+2$. Moreover,
if $K$ is an imaginary quadratic field, $z\in K\cap\H$ is a CM point, and the Fourier coefficients of $f(z)$ belong to an algebraic field $L$, then, for all $n\geq 0$, the value $\partial^nf(z)$ is an algebraic multiple (the constant of multiplication belongs to $L$) of the $(2k+2n)$-th power of the Chowla-Selberg period $\Omega_K$ defined by (see \cite{Z08}, the corollary in �6.3)
$$
\Omega_K\=\dfrac{1}{\sqrt{2\pi|D|}}\, \left(\prod^{|D|-1}_{m=1} \Gamma\Big(\dfrac{m}{|D|}\Big)^{\chi_D(m)}\right)^{w/2h(D)}. 
$$
Finally, the natural expansion of a modular form $f(z)$ of weight $2k$ in a neighborhood of a point $z=x+iy\in\H$ in terms of the operator \eqref{operador} is given explicitly by (Prop. 17 on p. 52 in \cite{Z08})
\begin{equation}\label{taylor}
(1-w)^{-2k}\, f\Big(\dfrac{z-\bar{z}w}{1-w}\Big)\=\sum_{n=0}^{\infty} \partial^nf(z)\, \dfrac{(4\pi yw)^n}{n!}\qquad(|w|<1).
\end{equation}
The main interest of this expansion is that, after normalization dividing by suitable powers of the period $\Omega_K$, all the coefficients are algebraic and belong to the field of definition of the Fourier coefficients of $f(z)$.

Below we keep the notation $H=\mathbb{Q}(j(\O_D),\sqrt{D})$ of the introduction.

\begin{teorema} \label{expr f}
For $k\geq 2$, we have
\begin{equation}\label{primera}
\Phi_\A(z)^k f_{k,D,\A}(z) \, \in\, S^H_{\left(\frac{12}{w}+2\right)k}(\Gamma)\, +\,  S^\mathbb{C}_{2k}(\Gamma)\, \Phi_\A(z)^k,
\end{equation} 
\begin{equation}\label{segunda}
\Phi_D(z)^k f_{k,D}(z)\, \in\, S^H_{\left(\frac{12h(D)}{w}+2\right)k}(\Gamma)\, +\,  S^\mathbb{C}_{2k}(\Gamma)\, \Phi_D(z)^k,
\end{equation}
In these decompositions, the modular forms in $S^H_{\left(\frac{12}{w}+2\right)k}(\Gamma)$ and $S^H_{\left(\frac{12h(D)}{w}+2\right)k}(\Gamma)$ are computable. 
\end{teorema}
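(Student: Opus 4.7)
The strategy is to prove \eqref{primera} for each class $\A$ separately and then derive \eqref{segunda} by summing, using the factorisation $\Phi_D(z)=\prod_{\A}\Phi_\A(z)$ read off the definitions of $H_D$ and $\Phi_D$. Fix a class $\A$ with CM point $\mathfrak{z}\in\mathcal{H}$, and set $F_\A(z):=\Phi_\A(z)^k f_{k,D,\A}(z)$, a cusp form of weight $\left(\frac{12}{w}+2\right)k$ by the discussion preceding the theorem. Since $\Phi_\A$ has a simple zero at $\mathfrak{z}$ (in the sense compatible with its $w$-th root definition) and $f_{k,D,\A}$ has a pole of order $k$ there, a decomposition $F_\A=g+h\,\Phi_\A^k$ with $g\in S^H_{\left(\frac{12}{w}+2\right)k}(\Gamma)$ and $h\in S^{\mathbb{C}}_{2k}(\Gamma)$ exists as soon as one can find $g$ such that $F_\A-g$ vanishes to order $\geq k$ at $\mathfrak{z}$: the quotient $(F_\A-g)/\Phi_\A^k$ is then automatically holomorphic on $\mathcal{H}$ and vanishes at the cusp, hence lies in $S^{\mathbb{C}}_{2k}(\Gamma)$.

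By the modified Taylor expansion \eqref{taylor} at $\mathfrak{z}$, the vanishing condition on $F_\A-g$ amounts to the $k$ equalities $\partial^n g(\mathfrak{z})=\partial^n F_\A(\mathfrak{z})$ for $n=0,1,\dots,k-1$. I would compute the right-hand sides explicitly: near $\mathfrak{z}$ the pole of $f_{k,D,\A}$ is contributed by the single form $Q_0=[a_0,b_0,c_0]\in\A$ with $Q_0(\mathfrak{z},1)=0$, the remaining terms in \eqref{f clase} being holomorphic there. Multiplying the Laurent development of $\pi^{-k}Q_0(z,1)^{-k}$ by the Taylor development of $\Phi_\A(z)^k$ and applying $\partial^n$ expresses each $\partial^n F_\A(\mathfrak{z})$ in terms of $\mathfrak{z}$, $a_0$, and the first $k$ modified derivatives of $\Phi_\A$ at $\mathfrak{z}$. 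Since $\Phi_\A$ has Fourier coefficients in $H$, the Chowla--Selberg principle recalled before the theorem identifies the latter as $H$-multiples of powers of $\Omega_K$; a careful accounting shows that the factors of $\pi$ coming from $\pi^{-k}$ and from $(\mathfrak{z}-\overline{\mathfrak{z}})^{-k}$ combine with the $\Omega_K$-normalisation so that $\partial^n F_\A(\mathfrak{z})$ is an $H$-multiple of $\Omega_K^{\left(\frac{12}{w}+2\right)k+2n}$. Existence of an $H$-rational cusp form $g$ realising these prescribed normalised Taylor coefficients then follows by linear algebra, using $S^H\otimes_H\mathbb{C}=S^{\mathbb{C}}$: surjectivity over $\mathbb{C}$ of the map $f\mapsto\bigl(\partial^n f(\mathfrak{z})/\Omega_K^{\left(\frac{12}{w}+2\right)k+2n}\bigr)_{n<k}$ into $\mathbb{C}^k$ is equivalent to $\Phi_\A^k\cdot S^{\mathbb{C}}_{2k}(\Gamma)$ having the expected codimension $k$ in $S^{\mathbb{C}}_{\left(\frac{12}{w}+2\right)k}(\Gamma)$, and forces surjectivity over $H$ with targets in $H^k$. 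The construction is effective, hence computable.

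Finally, \eqref{segunda} follows by writing
\begin{equation*}
\Phi_D(z)^k f_{k,D}(z)\=\sum_\A\Bigl(\prod_{\A'\neq\A}\Phi_{\A'}(z)\Bigr)^{\!k}F_\A(z),
\end{equation*}
substituting the decomposition of each $F_\A$, and observing that $\bigl(\prod_{\A'\neq\A}\Phi_{\A'}\bigr)^k$ is a modular form with $H$-rational Fourier coefficients. The $g$-contributions then gather into an element of $S^H_{\left(\frac{12h(D)}{w}+2\right)k}(\Gamma)$ and the $h$-contributions into a $\mathbb{C}$-cusp form of weight $2k$ times $\Phi_D(z)^k$. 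The main obstacle in the plan is the bookkeeping in the second paragraph: one must verify that all the transcendental factors --- namely $\pi$, $\Omega_K$, and those introduced by the non-holomorphic operator $\partial$ --- combine exactly into the predicted powers of $\Omega_K$, leaving an $H$-algebraic residue.
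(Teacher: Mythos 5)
Your proposal is correct and follows essentially the same route as the paper: expand everything at the CM point $\mathfrak{z}$ in the modified Taylor expansion \eqref{taylor}, observe that the pole of $f_{k,D,\A}$ there comes from a single form so that only the first $k$ coefficients of $\Phi_\A^k f_{k,D,\A}$ need to be matched, invoke the CM-algebraicity of the normalised values $\partial^n(\cdot)(\mathfrak{z})/\Omega_K^{\ast}$ for forms with algebraic Fourier coefficients, and note that subtracting a matching $H$-rational cusp form leaves something divisible by $\Phi_\A^k$ with quotient in $S^{\mathbb{C}}_{2k}(\Gamma)$. The two places where you diverge are minor and both work. First, for the existence and computability of $g$ the paper does not argue abstractly: it writes $\Phi_\A^k f_{k,D,\A}$ in the explicit basis $\Psi_{m,\A}=(j-j(\mathfrak{z}))^m E_4^{\delta}E_6^{\epsilon}\Delta^{M}$, whose jet-evaluation matrix at $\mathfrak{z}$ is triangular with nonzero diagonal entries $\partial^m\Psi_{m,\A}(\mathfrak{z})$, and this immediately gives both surjectivity and a closed recursive formula for the coefficients $c_0,\dots,c_{k-1}$; your rank-plus-descent argument (via $S^H\otimes_H\mathbb{C}=S^{\mathbb{C}}$) is fine, but to close it you should state the dimension identity $\dim S_{(\frac{12}{w}+2)k}=\dim S_{2k}+k$ that gives the codimension $k$, and note that effectivity still requires computing the normalised jets of an $H$-basis, which is exactly what the paper's triangular basis delivers for free. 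Second, the paper proves \eqref{segunda} by simply re-running the same argument at the $h(D)$ CM points of the fundamental domain (``analogous''), whereas you deduce it from \eqref{primera} by summing against $\bigl(\prod_{\A'\neq\A}\Phi_{\A'}\bigr)^k$; this is a clean alternative and is legitimate since each $j(\mathfrak{z}')$, hence each $\Phi_{\A'}$, is $H$-rational. Your closing worry about the transcendental bookkeeping is the right one, and it resolves exactly as in the paper: in the product of the polar term $\pi^{-k}a^{-k}(z-\mathfrak{z})^{-k}(z-\overline{\mathfrak{z}})^{-k}$ with the expansion \eqref{taylor} of $\Phi_\A^k$, the $\pi^{-k}$ cancels against the $(4\pi y)^{k}$ from the $w^{k}$-term, leaving only algebraic quantities such as $a$, $y=\Im\mathfrak{z}$ and $\partial^{k+n}\Phi_\A^k(\mathfrak{z})/\Omega_K^{(\frac{12}{w}+2)k+2n}$, which is precisely the computation behind \eqref{taylor expa} in the paper.
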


\begin{proof}  Since the proofs of \eqref{primera} and \eqref{segunda} are analogous, we focus on the proof of \eqref{primera}. The value $w$ does not play any role in the proof either, so we assume $w=1$ to avoid tedious notation.

In the usual fundamental domain for the action of $\Gamma$ on the upper-half plane, the function $f_{k,D,\A}(z)$ has only one pole $\z=x+iy$, which is the root in $\mathcal{H}$ of some reduced binary quadratic form $[a,b,c]\in\A$. By considering the definition \eqref{f clase} we have, in a neighborhood of $\z$,
\begin{equation}\label{taylor f}
f_{k,D,\A}(z)\=\dfrac{1}{(\pi a(z-\z)(z-\overline{\z}))^k}\, +\, O(1).
\end{equation}
Replacing $f_{k,D,\A}$ in \eqref{taylor f} by its image by the transformation $\begin{pmatrix}-\z' &\z\\-1 &1\end{pmatrix}$, we have
\begin{equation}\label{1}
(1-w)^{-2k}f_{k,D,\A}\Big(\dfrac{\z-\overline{\z} w}{1-w}\Big)\=\dfrac{w^{-k}}{\pi^k\, (2aiy)^{2k}}\, +\, O(1)\qquad (|w|<1).
\end{equation}
Since $\z$ is a zero of order $k$ of $\Phi_\A^k(z)$, the first non-zero coefficient in the modified Taylor expansion of $\Phi_\A^k(z)$ is the coefficient of $w^k$: 
\begin{equation}\label{aux1}
(1-w)^{-12k}\Phi_\A^k\Big(\dfrac{\z-\overline{\z} w}{1-w}\Big)\=\sum^{\infty}_{n=k}\partial^{n}\Phi_\A^k(\z)\, \dfrac{(4\pi yw)^n}{n!}.
\end{equation}
Combining \eqref{1} with \eqref{aux1}, we compute the expansion of $\Phi_\A^k\, f_{k,D,\A}$ up to the $(k-1)$-st power (included):
\begin{equation}\label{taylor expa}
(1-w)^{-14k} \Phi_\A^k\, f_{k,D,\A}\Big(\dfrac{\z-\overline{\z} w}{1-w}\Big)\=\sum^{k-1}_{n=0}\partial^{k+n}\Phi_\A^k(\z)\, \dfrac{(4\pi yw)^n}{(n+k)!\, a^{2k}\, (-y)^k}+O(w^k).
 \end{equation}

Since the function $\Phi_\A(z)^k\, f_{k,D,\A}(z)$ belongs to the space $S_{14k}(\Gamma)$, it can be written as
\begin{equation}\label{formula}
\Phi_\A(z)^k\, f_{k,D,\A}(z)=\ast\, E_4(z)^\delta\, E_6(z)^\epsilon \Delta(z)^M,
\end{equation}
where $\ast$ is a complex constant and
$$
14k=4\delta+6\epsilon+12M,
$$
with $\delta\in\left\{0,1,2\right\}$, $\epsilon\in\left\{0,1\right\}$, $M=\mathrm{dim}(S_{14k}(\Gamma))=\left[\frac{14k}{12}\right]$. Moreover, the triple $(\delta,\epsilon,M)$ and the constant $\ast$ are uniquely determined.

The functions
$$
\Psi_{m,\A}(z)\=(j(z)-j(\z))^m\, E_4(z)^\delta\, E_6(z)^\epsilon\, \Delta(z)^{M} \qquad (0\leq m\leq M-1)
$$
are a basis for the space $S_{14k}(\Gamma)$ (as a $\mathbb{C}$-vector space).
We can write $\Phi_\A(z)^k\, f_{k,D,\A}(z)$ in this basis:
\begin{equation}\label{2}
\Phi_\A(z)^k\, f_{k,D,\A}(z)\=\sum_{m=0}^{M-1}c_m\, \Psi_{m,\A}(z)
\end{equation}
with $c_m\in\mathbb{C}$ for $m=0,\ldots,M-1$.
The functions $\dfrac{\Psi_{m,\A}(z)}{\Phi^k_{\A}(z)}$ are cusp forms of weight $2k$ for $m=k,\ldots,M-1$. We need to compute the coefficients $c_m$ for $0\leq m\leq k-1$ and show that they belong to $H$. We can expand again $\Phi_\A(z)^k\, f_{k,D,\A}(z)$, using now the expression \eqref{2}, in terms of the coefficients $c_m$ and the modified Taylor expansion of the functions $\Psi_{m,\A}$ in a neighborhood of $\z$ (recall that $\z$ is a zero of order $m$ of $\Psi_{m,\A}$ and so the first non-zero coefficient in the expansion of $\Psi_{m,\A}$ is the coefficient of the term of order $m$):
\begin{equation}\label{aux2}
(1-w)^{-14k}\Phi_\A(z)^k\, f_{k,D,\A}\Big(\dfrac{\z-\overline{\z} w}{1-w}\Big)\=\sum_{m=0}^{M-1}c_m\sum^\infty_{n=m}\partial^n\Psi_{m,\A}(\z)\, \dfrac{(4\pi yw)^n}{n!}.
\end{equation} 
By comparing the expansions \eqref{taylor expa}, and \eqref{aux2} and by matching the coefficients for each power of $w$ from both expressions, we make explicit the constants $c_m$ for $m=0,\ldots, k-1$:
$$
c_0\=\dfrac{\partial^k\Phi^k_\A(\z)}{k!\, a^{2k}\, (-y)^k\, \Psi_{0,\A}(\z)},
$$
$$
c_m\=\dfrac{m!\, \partial^{m+k}\Phi_{\A}^k(\z)}
{(m+k)!\, a^{2k}\, (-y)^k\, \partial^{m}\Psi_{m,\A}(\z)}-\dfrac{\sum^{m-1}_{n=0}c_n\, \partial^{m}\Psi_{n,\A}(\z)}{\partial^{m}\Psi_{m,\A}(\z)}.
$$

For $j,l\geq 0$, the values $\partial^{j}\, \Psi_{l,\A}(\z)$ and $\partial^{j+k}\Phi_\A(\z)^k$ belong to $\mathbb{Q}(j(\z),\z)\, \Omega_D^{14k+2j}$, so the coefficients $c_m$ belong to the field $\mathbb{Q}(j(\z),\z)$ for all $m=0,\ldots, k-1$. 
\end{proof}

We call transcendent parts the projections on the second space with complex coefficients in the decompositions \eqref{primera} and \eqref{segunda}. The transcendent parts are cuspidal but unfortunately we cannot compute them. We show below how we use a theorem of Borcherds to find certain explicit integral polynomials in the functions $f_{k,D}$ ($k$ is fixed and $D$ takes negative values) which get rid of the transcendent parts. These polynomials are given by suitable linear combinations of Hecke operators acting on $f_{k,D}(z)$.
For each prime $p$, the action of the $p$-th Hecke operator $T_p$ on $f_{k,D}$ is given in \cite{KZ80} by the closed form
\begin{equation}\label{fkd hecke primo}
f_{k,D}|_{2k}T_p\=p^{2k-1}f_{k,Dp^2}+\left(\dfrac{D}{p}\right)p^{k-1}f_{k,D}+f_{k,\frac{D}{p^2}},
\end{equation}
with the convention $f_{k,\frac{D}{p^2}}=0$ if $p^2\nmid D$ (the proof of this relation is exactly the same as the proof of the analogous statement for non-holomorphic modular forms of weight zero given in \cite{Z81} (proof of equation (36))).
Since the Hecke algebra is generated in an explicit way by the $p$-th Hecke operators through the identities
$$
\begin{array}{llll}
T_{mn}&=&T_m T_n &\qquad\mbox{if $(m,n)=1$},\\
T_{p^{r+1}}&=&T_{p^r}T_p-p^{k-1}T_{p^{r-1}} &\qquad\mbox{if $r\geq 1$ and $p$ is prime},
\end{array}
$$
the action of the $n$-th ($n$ not necessarily prime) Hecke operator $T_n$ on $f_{k,D}$ can also be written in a closed form, as an explicit integral polynomial in the functions $f_{k,D}$. 

Given $\underline{\lambda}=\left\{\lambda_n\right\}^\infty_{n=1}\in\oplus^\infty_{n=1}\mathbb{Z}$ and $k\geq 2$, we write 
$$
\phi_{\underline{\lambda}}=\sum^\infty_{n=1}\lambda_n T_n,\qquad\quad
f_{k,D,\underline{\lambda}}(z)\=f_{k,D}|\phi_{\underline{\lambda}}(z).
$$

\begin{teorema}\label{teo final} 
If there exists a weakly holomorphic modular form
\begin{equation}\label{g}
g_{\underline{\lambda}}=\sum^\infty_{n=1} \lambda_n q^{-n}+ O(1)\in M^!_{2-2k}(\Gamma),
\end{equation}
then the function $\Phi_D(z)^k f_{k,D,\underline{\lambda}}(z)$ belongs to the space $S^H_{\left(\frac{12h(D)}{w}+2\right)k}(\Gamma)$.

(Note that $\lambda_n=0$ $\forall n\geq N$ for some $N\geq 0$, so $g_{\underline{\lambda}}$ has only finitely many non-vanishing terms in its principal part.)
\end{teorema}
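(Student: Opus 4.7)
My plan is to combine the decomposition obtained in Theorem~\ref{expr f} with a residue-theorem argument that annihilates the transcendent cusp form appearing in $f_{k,D,\underline{\lambda}}$, and then to invoke Borcherds's theorem to ensure that the remaining meromorphic piece has only poles at $\mathfrak{Z}_D$, which $\Phi_D^k$ kills. The main obstacle I expect is the last point: controlling the extraneous CM singularities produced by the Hecke action at discriminants of the form $Dn^2/m^2$.

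First I would apply Theorem~\ref{expr f} and write $f_{k,D}=\mu_D+\varphi_D$, where $\mu_D=h_D/\Phi_D^k$ is the meromorphic piece with Fourier coefficients in $H$, and $\varphi_D\in S^{\mathbb{C}}_{2k}(\Gamma)$ is the transcendent cusp form. By linearity of the Hecke operators,
\[
f_{k,D,\underline{\lambda}}\;=\;\mu_D\,|\,\phi_{\underline{\lambda}}\;+\;\varphi_D\,|\,\phi_{\underline{\lambda}}.
\]
The goal is to show that the second summand vanishes identically and that $\Phi_D^k$ times the first is a holomorphic cusp form with $H$-rational Fourier coefficients.

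To kill $\varphi_D\,|\,\phi_{\underline{\lambda}}$, I would expand $\varphi_D=\sum_i c_i F_i$ in a basis of normalized Hecke eigenforms of $S_{2k}(\Gamma)$, so that $F_i|T_n=a_n(F_i)\,F_i$ and
\[
\varphi_D\,|\,\phi_{\underline{\lambda}}\;=\;\sum_i c_i\Bigl(\sum_{n\geq 1}\lambda_n\, a_n(F_i)\Bigr)F_i.
\]
For each $i$, the product $F_i\,g_{\underline{\lambda}}$ lies in $M^!_2(\Gamma)$ and its constant Fourier coefficient is $\sum_{n\geq 1}\lambda_n a_n(F_i)$. Since $X(1)\cong\mathbb{P}^1$ has $\infty$ as its unique cusp, the residue theorem applied to the $1$-form $F_i\,g_{\underline{\lambda}}\,dz$ forces that residue, and hence the inner sum, to vanish. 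Therefore $\varphi_D\,|\,\phi_{\underline{\lambda}}=0$ and $f_{k,D,\underline{\lambda}}=\mu_D\,|\,\phi_{\underline{\lambda}}$.

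The delicate step is then showing that $\Phi_D^k\,(\mu_D\,|\,\phi_{\underline{\lambda}})$ is holomorphic on $\H$. Iterating \eqref{fkd hecke primo}, the function $\mu_D|T_n$ acquires poles at $\mathfrak{Z}_{Dn^2/m^2}$ for $m\mid n$, and $\Phi_D$ vanishes only on $\mathfrak{Z}_D$, so a priori extraneous poles could survive multiplication by $\Phi_D^k$. At this point I would invoke Borcherds's theorem \cite{B99}: the existence of $g_{\underline{\lambda}}\in M^!_{2-2k}(\Gamma)$ encodes a principal Heegner-divisor relation which, combined with the bookkeeping of \eqref{fkd hecke primo}, cancels the residues of $\mu_D\,|\,\phi_{\underline{\lambda}}$ at every $\mathfrak{Z}_{D'}$ with $D'\neq D$. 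Once those extraneous poles are ruled out, $\Phi_D^k$ removes the remaining poles at $\mathfrak{Z}_D$, and the resulting cusp form has Fourier coefficients in $H$ because $h_D$ does and the Hecke operators act on Fourier coefficients by $H$-linear combinations, placing it in $S^H_{(12h(D)/w+2)k}(\Gamma)$ as required.
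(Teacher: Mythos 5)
Your treatment of the cuspidal (transcendent) part is correct and is in substance the paper's own argument in different clothing: the paper quotes Borcherds's criterion to obtain \eqref{prop M}, i.e. $\sum_n\lambda_n c_n=0$ for every cusp form $\sum_n c_nq^n\in S_{2k}(\Gamma)$, and then applies this to $T_m f$ for every $m$ to conclude $f|\phi_{\underline{\lambda}}=0$; you reach the same conclusion by diagonalizing in a Hecke eigenbasis and proving the needed vanishing $\sum_n\lambda_n a_n(F_i)=0$ directly, as the constant term of the weight-two form $F_i\,g_{\underline{\lambda}}\in M^!_2(\Gamma)$. That part stands (it is essentially the standard proof of the duality the paper cites).

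The gap is your final step. The only information that the hypothesis \eqref{g} yields through \cite{B99} --- and the only thing the paper extracts from it --- is the vanishing of the pairing of $\underline{\lambda}$ against $S_{2k}(\Gamma)$; this controls the holomorphic cuspidal summand of the decomposition and says nothing about where the meromorphic piece $\mu_D|\phi_{\underline{\lambda}}$ has poles in $\H$. Your claim that $g_{\underline{\lambda}}$ ``encodes a principal Heegner-divisor relation'' which cancels the residues at every $\mathfrak{Z}_{D'}$ with $D'\neq D$ is not an argument, and no such cancellation takes place at the level at which you invoke it: in the paper's own example ($D=-3$, $k=6$, $\underline{\lambda}=(24,1,0,\ldots)$), relation \eqref{fkd hecke primo} exhibits $f_{6,-3,\underline{\lambda}}$ as a linear combination in which $f_{6,-12}$ occurs with a nonzero coefficient, while all other terms are holomorphic at the discriminant $-12$ point $i\sqrt{3}$ (root of the primitive form $[1,0,3]$, where $\Phi_{-3}=E_4$ does not vanish); so the contribution of that discriminant does not cancel among the $f_{k,D'}$, and any holomorphy of $\Phi_D^k\bigl(\mu_D|\phi_{\underline{\lambda}}\bigr)$ at such points would have to come from a mechanism your proposal does not supply. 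For comparison, the paper does not take this route at all: its proof applies the decomposition of Theorem \ref{expr f} directly to $\Phi_D^k f_{k,D,\underline{\lambda}}$, tacitly treating its poles as governed by $\Phi_D$, and then only removes the transcendent cusp form. You have therefore correctly spotted a point on which the paper is silent, but the second appeal to Borcherds does not close it; as written, the crucial holomorphy claim at CM points of discriminant $Dn^2/m^2\neq D$ is unproven in your argument.
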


\begin{proof} According to Theorem \ref{expr f}, the function $\Phi_D(z)^k f_{k,D,\underline{\lambda}}(z)$ belongs to the space
\begin{equation}\label{descomposicion}
S^H_{\left(\frac{12h(D)}{w}+2\right)k}(\Gamma)\, +\,  S^\mathbb{C}_{2k}(\Gamma)\, \Phi_D(z)^k.
\end{equation} 
We have to show that there is no transcendent part in the decomposition \eqref{descomposicion}  if there exists a weakly holomorphic modular form $g_{\underline{\lambda}}$ of the form \eqref{g}.

If there exists a weakly holomorphic modular form such as \ref{g}, then, by \cite{B99} (section 3),
\begin{equation}\label{prop M}
\sum^\infty_{n=1}\lambda_n c_n=0 \quad\mbox{ for any }
f=\sum^\infty_{n=1}c_n q^n\in S_{2k}(\Gamma).
\end{equation}
If $f=\sum^\infty_{n=1}c_nq^n$ is a cusp form of weight $2k$ for $\Gamma$, so is $T_m(f)$, which Fourier expansion is (see \cite{Z08}, �4.1 of the first part)
\begin{equation}\label{Taylor}
T_m(f)=\sum^\infty_{n=1} \Big(\sum_{\substack{a\mid n\\a\mid m}}a^{k-1}c_{nm/a^2}\Big) q^n.
\end{equation}
Hence  \eqref{prop M} implies 
\begin{equation}\label{coef}
\sum_{n=1}^\infty \lambda_n\Big(\sum_{\substack{a\mid n\\a\mid m}}a^{k-1}c_{nm/a^2}\Big)=0.
\end{equation}
But the expression in \eqref{coef} is the $m$-th coefficient in the Fourier expansion of $f|\phi_{\underline{\lambda}}$, so $f|\phi_{\underline{\lambda}}=0$. In particular, if we denote by $f(z)$ the transcendent part in the decomposition
$$f_{k,D}(z)\, \in\, \dfrac{S^H_{(\frac{12}{w}+2)k}(\Gamma)}{\Phi_D(z)^k}\, +\,  S^\mathbb{C}_{2k}(\Gamma),$$
we have $f|\phi_{\underline{\lambda}}=0$. 
\end{proof}

\begin{rk} In the proof of Theorem \ref{teo final}, we deduce the statement (i) below from the statement (ii):
\begin{enumerate}[(i)]
\item $f|\phi_{\underline{\lambda}}=0$ for any cusp form $f\in S_{2k}(\Gamma)$,

\item there exists a weakly holomorphic modular form
$$
g_{\underline{\lambda}}=\sum^\infty_{n=1} \lambda_n q^{-n}+ O(1)\in M^!_{2-2k}(\Gamma).
$$
\end{enumerate}
In fact these two statements are equivalent. On the one hand, the statement (ii) and the condition \eqref{prop M} are equivalent (see \cite{B99}). On the other hand, the function  
 $f|\phi_{\underline{\lambda}}=\sum_{n=1}^\infty \lambda_n T_n(f)$ in statement (i), defined from a cusp form $f=\sum_{n=1}^\infty c_n q^n$, is identically zero if and only if 
\begin{equation}\label{coef Fourier}
\sum^\infty_{n=1}\lambda_n\Big(\sum_{\substack{a\mid n\\a\mid m}}a^{k-1}c_{nm/a^2}\Big)=0\qquad\forall m\geq 1.
\end{equation}
For $m=1$ \eqref{coef Fourier} becomes 
$$
\sum^\infty_{n=1}\lambda_n c_n=0.
$$ 
\end{rk}

\section{Appendix: Explicit examples}

In this section we give the explicit decomposition \eqref{segunda} for $D=-3$ and $k\in\left\{2,3,4,5,6,7\right\}$. We also give a sequence $\underline{\lambda}$ such that, for any $k$ in the previous set, $f_{k,-3,\underline{\lambda}}(z)$ has no transcendent part in the decomposition \eqref{descomposicion}.

There is only one $\Gamma$-equivalence class of binary quadratic forms of discriminant -3; we denote by $[1,1,1]$ the reduced quadratic form in this class and by $z_{-3}=\dfrac{1+\sqrt{3}i}{2}$ its root in $\H$. The Hilbert extension of the quadratic field $\mathbb{Q}(\sqrt{3}i)$ is trivial. For $k\in\left\{2,3,4,5,7\right\}$, the space $S_{2k}(\Gamma)$ is trivial, so there is no transcendent part in the decomposition \eqref{segunda} of $f_{k,-3}$. The value $k=6$ is more interesting, so we give the details of the calculations for $f_{6,-3}(z)$ and we list the calculated expressions of $f_{k,-3}(z)$ in a table for $k\in\left\{2,3,4,5,7\right\}$.

For \textbf{$k=6$}, the space $S_{36}(\Gamma)$ is generated by $\Delta^3(z),\Delta^2(z)E_4^3(z), \Delta(z) E_4^6(z)$. There are three complex constants $C_0, C_1, C_2$ such that
\begin{equation}\label{6,-3}
E_4^6(z)f_{6,-3}(z)=C_0\Delta(z)^3+C_1\Delta(z)^2E_4(z)^3+C_2\Delta(z) E_4(z)^6.
\end{equation}
In order to compute $C_0$ and $C_1$ we compare the modified Taylor expansions in a neighborhood of $z_{-3}$ from both sides of the equality \eqref{6,-3}. 
We have

\begin{align*}
&(1-w)^{-36} \Delta^3\Big(\dfrac{z_{-3}-\overline{z_{-3}}w}{1-w}\Big) = -\; \Omega_{-3}^{36} - 2^3 \cdot3\; \pi^3\; \Omega_{-3}^{42}\;  w^3 + O(w^6);
\\
&(1-w)^{-36} \Delta^2E_4^3\Big(\dfrac{z_{-3}-\overline{z_{-3}}w}{1-w}\Big) = - 2^{12}\cdot3^3\; \pi^3\; \Omega_{-3}^{42}\; w^3 +O(w^6);
\end{align*}
\begin{align*}
&(1-w)^{-36} \Delta E_4^6\Big(\dfrac{z_{-3}-\overline{z_{-3}}w}{1-w}\Big) = O(w^6);
\\
&(1-w)^{-24} E_4^6\Big(\dfrac{z_{-3}-\overline{z_{-3}}w}{1-w}\Big) = 2^{24}\cdot3^6\; \pi^6\; \Omega_{-3}^{36}\; w^6 - 2^{25}\cdot3^7\cdot 5\; \pi^9\; \Omega_{-3}^{42}\; w^9 + O(w^{12});
\\
&(1-w)^{-12} f_{6,-3}\Big(\dfrac{z_{-3}-\overline{z_{-3}}w}{1-w}\Big) = \dfrac{w^{-6}}{\pi^6\; 3^6} + O(1).
\end{align*}
The equation \eqref{6,-3} becomes
$$
2^{24}\Omega_{-3}^{36}  - 2^{25}\cdot3\cdot5\pi^3\Omega_{-3}^{42}w^3 + O(w^6) = -C_0\Omega_{-3}^{36} - (2^3 \cdot 3C_0 + 2^{12}\cdot3^{3}C_1)\pi^3 \Omega_{-3}^{42}w^3 + O(w^6).
$$

If we compare the constant terms and the cubic powers of $w$, we find
$$
C_0 = -2^{24}, \quad\qquad C_1= 2^{13}.
$$
Hence
$$
f_{6,-3}=\dfrac{-2^{24}\Delta^3+2^{13}\Delta^2E_4^3}{E_4^6}+C_2\Delta,
$$
where $C_2$ is the coefficient of $q$ in the Fourier expansion of $f_{6,-3}(z)$. It is given by \eqref{coef fourier} and has the  numerical value $C_2\approx -550.5139$.

For \textbf{$k\in\left\{2,3,4,5,6,7\right\}$} the expressions of $f_{k,-3}$ are listed in the table below:
\begin{center}
\begin{tabular}{c|c}
 $k$ &$f_{k,-3}$ \\
\hline
&\\
2 &$-2^8\dfrac{\Delta}{E_4^2}$\\
&\\
3 &$-\dfrac{2^9 E_6\, \Delta}{3\sqrt{3}\, E_4^3}$\\
&\\
4 &$\dfrac{2^{16}\cdot3^2\, \Delta^2-2^6\Delta\, E_4^3}{3\, E_4^4}$\\
&\\
5 &$\dfrac{2^{17}\cdot3^3 E_6\, \Delta^2 - 2^5\cdot13\, E_4^3\, E_6\, \Delta}{3^4\sqrt{3}\, E_4^5}$\\
&\\
6 &$\dfrac{-2^{24}\Delta^3+2^{13}\Delta^2E_4^3}{E_4^6}+C\Delta$,\, 
where $C\approx -550.5139$\\
&\\
7 &$\dfrac{-2^{25}\cdot3^5\cdot 5\, E_6\, \Delta^3+2^{13}\cdot3^2\cdot31\, E_4^3\, E_6\, \Delta^2 -2^7E_4^6\, E_6\, \Delta}{3^6\cdot5\sqrt{3}\, E_4^7}$\\
\end{tabular}
\end{center}

Note that if $k\in\left\{2,3,4,5,7\right\}$, then
$$
\dfrac{E_{14-2k}(z)}{\Delta(z)}=q^{-1}+O(1)\in M_{2-2k}^!(\Gamma).
$$ 
If we choose $\underline{\lambda}=(1,0,0,\ldots)$, then $f_{k,-3,\underline{\lambda}}=f_{k,-3}$ and, according to Theorem \ref{teo final}, there is no transcendent part in the decomposition \eqref{segunda} (this agrees with the triviality of $S_{2k}(\Gamma)$).
\\

For $k=6$, since
$$
\dfrac{E_4(z)^2E_6(z)}{\Delta(z)^2}=q^{-2}+24q^{-1}+O(1)\in M_{-10}^!(\Gamma),
$$
if we choose $\underline{\lambda}=(24,1,0,0,\ldots)$, then
\begin{align*}
f_{6,-3,\underline{\lambda}}&=24\, f_{6,-3}|T_2+f_{6,-3}\\
&=24\, \Big(2^{11}f_{6,-12}+2^5\left(\dfrac{-3}{2}\right)f_{6,-3}\Big)+f_{6,-3}.
\end{align*}
The functions $f_{6,-3}$ and $f_{6,-12}$ have both transcendent cuspidal parts:
$$
f_{6,-12}=\dfrac{-2^{12}\cdot3^2\Delta^3 + 2\cdot41\, \Delta^2E_4^3}{3^2E_4^6}+C'\Delta,
$$
but the new combination $f_{6,-3,\underline{\lambda}}$ is again purely algebraic:
$$
f_{6,-3,\underline{\lambda}}=\dfrac{-2^{24}\cdot 3\cdot 13\, \Delta^3+2^{13}\cdot167\, \Delta^2E_4^3}{3E_4^6}.
$$

\textbf{Acknowledgements.} This work is part of my PhD thesis. I wish to express my gratitude to Don Zagier for his precious advice in discussing mathematics in his supervising. I would like to thank Pilar Bayer for her careful reading of this paper.
 

\end{document}